\theoremstyle{plain}
\newtheorem{theorem}{Theorem}[section]
\newtheorem{proposition}[theorem]{Proposition}
\newtheorem{lemma}[theorem]{Lemma}
\newtheorem{remark}[theorem]{Remark}
\theoremstyle{plain}
  \newtheorem*{thm*}{Main Theorem}
\newtheorem{corollary}[theorem]{Corollary}
\def\a{\alpha}
\def\e{\epsilon}
\def\r{\rho}
\def\z{\zeta}
\def\b{\beta}
\def\g{\gamma}
\def\ud{\underline{d}_{\mu_\alpha}}
\def\nd{\underline{d}_{\nu}}
\def\R{\mathbb{R}}
\def\P{\mathbb{P}}
\def\N{\mathbb{N}}
\def\Z{\mathbb{Z}}
\def\P{\mathbb{P}}
\def\c{\mathcal C}
\DeclareMathOperator{\card}{card}
\DeclareMathSymbol{\varnothing}{\mathord}{AMSb}{"3F}
\title{The Hausdorff dimension of the set of dissipative points  for 
a Cantor-like model set for singly cusped parabolic dynamics}
\author{J\"org Schmeling and Bernd O. Stratmann}
\address{J\"org Schmeling:
     Department of Mathematics, LTH, University of Lund, 
     P.O. Box 118, SE-221 00 LUND, Sweden}
  \email{joerg@maths.lth.se}
  \address{Bernd O. Stratmann, Mathematical Institute, University of
    St. Andrews, North Haugh, St. Andrews KY16 9SS, Scotland}
  \email{bos@st-and.ac.uk}
  \keywords{Fractal geometry, Hausdorff dimension, Cauchy random walks, Kleinian groups}
  \subjclass[2000]{Primary: 60D05, 14L35, 51N30}
\begin{document}
\maketitle

\pagestyle{myheadings}

\markboth{The Hausdorff dimension of the set of dissipative points}{J. Schmeling 
and B. O. Stratmann}

\begin{abstract}
In this paper 
we introduce and study a certain intricate Cantor-like set $\c$ 
contained in unit interval. 
Our 
main result is to show that the set $\c$ itself, as well as
the set of dissipative points within $\c$,  both have  Hausdorff 
dimension equal to $1$.  The proof uses  the
transience of a certain non-symmetric Cauchy-type random walk.
\end{abstract}

\section{Introduction}\label{sec1}
\noindent In this paper we estimate the Hausdorff dimension of the set 
$\c_{\infty}$ of dissipative points within a certain Cantor-like 
subset $\c$ of the unit interval $[0,1) \subset \R$. There are two 
ways to define these sets. The first is via the interation of  a certain 
interval map $\Phi$, where $\c$ represents the set of points with an 
infinite forward orbit, and $\c_{\infty}$ is equal to the basin 
of attraction of the set of critical points of $\Phi$ (see Remark 
\ref{rem1} at the end of this introduction).  
The second construction is purely in terms of fractal geometry. Let 
us remark 
that our  motivation for considering the sets $\c$ and 
$\c_{\infty}$  stems from the investigations in \cite{PS} of the geometry of
limit sets of Kleinian groups with singly cusped parabolic dynamics. 
For the purposes of this paper this link is 
       irrelevant and therefore we omit the details. 
   However,  intuition coming from
   Kleinian groups has historically played a very important role in 
  in the development of Real and Complex Dynamics, and this paper can 
  be seen as 
  adding 
  to this tradition. 
 
\noindent Let us begin with by giving the slightly intricate, but more 
down-to-earth fractal geometric 
construction of 
the sets $\c$ and $\c_{\infty}$. For this we have to define certain 
families of fundamental intervals by induction as follows. We start with the unit 
interval $[0,1)$, and then partition the left 
half  of $[0,1)$  into the 
infinitely many intervals
\[  I_{1}:= 
\left[0,\frac{3}{\pi^{2}}\right), \, \, \hbox{and} \, \, 
I_{k+1}:= \left[ \frac{3}{\pi^{2}} \sum_{l=1}^{k} l^{-2},  
\frac{3}{\pi^{2}} \sum_{l=1}^{k+1} 
l^{-2}\right) ,\, \, \hbox{for} \, \, k \in \N.\]
The family of these first-level intervals will be denoted by 
$ C_{1}$.
Note that the right half $\left[\frac{1}{2}, 1\right)$ of the unit 
interval, which is clearly 
not captured by $C_{1}$,  should be 
interpreted as `the hole at the first level'.  
The second step is to partition each element $I_{k_{1}} \in C_{1}$ as 
follows. By starting from the left
  endpoint of  $I_{k_{1}}$, we partition the left half of 
  $I_{k_{1}}$
 into infinitely many mutually adjacent intervals 
$$I_{k_1 k_{1}+1}, \cdots,
  I_{k_1 k_1+l},\cdots,$$ where the diameters of these intervals are given by  
\begin{equation*}
\left|I_{k_1 k_1+l}\right|=\frac{3}{\pi^2}\frac{|I_{k_1}|}{l^2}, \, \, 
\hbox{for}\,\, l \in \N.
\end{equation*}
Similarly, by starting from the right
  endpoint  of  $I_{k_{1}}$ we insert into the right half 
  of $I_{k_{1}}$
  the $(k_{1}+1)$
  mutually adjacent intervals 
$$I_{k_1 k_{1}}, I_{k_1k_1-1}, \cdots,
   I_{k_1 0},$$ with diameters  given by
\begin{equation*}\label{length2}
\left|I_{k_1 k_1-l}\right|= 
\left\{
\begin{array}
{r@{\quad \hbox{for}\quad}l}     
\frac{3}{\pi^2}\frac{|I_{k_1}|}{(2k_{1})^2} &  l=0 \\
\frac{3}{\pi^2}\frac{|I_{k_1}|}{l^2} &   l \in \{1,\ldots,k_{1}\}.
\end{array} \right.\
\end{equation*}
The family of these second-level intervals will be denoted by 
$ C_{2}$.
Note that  in this way  we have perforated each  $I_{k_{1}}\in C_{1}$ 
such that there is a `hole'  in $I_{k_{1}}$  
with diameter  of 
order  $|I_{k_1}| / k_{1}$. \\
We then proceed by induction as follows.
Suppose that for $n \geq 2$  the $n$-th level interval  $I_{k_{1}\ldots k_{n}}$ has been 
constructed.  The 
$(n+1)$-th level intervals arising from $I_{k_{1}\ldots k_{n}}$
are then obtained as follows.
There are two 
cases to consider. The first case is that $k_{n}=0$, 
and here the 
partition only continues in the left half of  $I_{k_{1}\ldots k_{n-1}0}$.
More precisely, in this case we
start from  the left
  endpoint of  $I_{k_{1}\ldots k_{n-1}0}$ and  partition the 
  left half of $I_{k_{1}\ldots k_{n-1}0}$
 into infinitely many mutually adjacent intervals 
$$I_{k_1\cdots k_{n-1}0 1}, \cdots,
  I_{k_1\cdots k_{n-1}0 l},\cdots,$$  with diameters  given by  
\begin{equation*}\label{length1}
\left|I_{k_1\cdots 
k_{n-1}0 l}\right|=\frac{3}{\pi^2}\frac{|I_{k_1\cdots k_n}|}{l^2}, \, \, 
\hbox{for}\,\, l \in \N.
\end{equation*}
In the second case we have that $k_{n}\in \N$, and here we
start from the left
  endpoint of  $I_{k_{1}\ldots k_{n}}$ and partition the left half of 
  $I_{k_{1}\ldots k_{n}}$
 into infinitely many mutually adjacent intervals 
$$I_{k_1\cdots k_n k_n+1}, \cdots,
  I_{k_1\cdots k_nk_n+l},\cdots.$$ The diameters of these intervals
  are  
\begin{equation}\label{length1}
\left|I_{k_1\cdots 
k_nk_n+l}\right|=\frac{3}{\pi^2}\frac{|I_{k_1\cdots k_n}|}{l^2}, \, \, 
\hbox{for}\,\, l \in \N.
\end{equation}
Similarly, by starting from the right
  endpoint  of  $I_{k_{1}\ldots k_{n}}$ we insert into the right half of $I_{k_{1}\ldots k_{n}}$
  the $(k_{n}+1)$
  mutually adjacent intervals 
$$I_{k_1\cdots k_n k_n}, I_{k_1\cdots k_n k_n-1}, \cdots,
   I_{k_1\cdots k_n0},$$ with diameters  given by
\begin{equation}\label{length2}
\left|I_{k_1\cdots 
k_nk_n-l}\right|= 
\left\{
\begin{array}
{r@{\quad \hbox{for}\quad}l}     
\frac{3}{\pi^2}\frac{|I_{k_1\cdots k_n}|}{(2k_{n})^2} &  l=0 \\
\frac{3}{\pi^2}\frac{|I_{k_1\cdots k_n}|}{l^2} &   l \in \{1,\ldots,k_{n}\}.
\end{array} \right.\
\end{equation}
The so obtained set of intervals  of the $(n+1)$-th level will be 
denoted by $C_{n+1}$. 
That is,
$$ C_{n}= \{I_{k_{1}\ldots k_{n}}: k_{1}\in \N, k_{i+1} \in \N_{0} 
\, \hbox{ for } \,  
i \in \N , \hbox{ and if $k_{i} =0$ then $k_{i+1} \neq 0$}\}.$$
Again, note that by this we have perforated $I_{k_{1}\ldots k_{n}}$ 
such that in the first case `the hole' is precisely the right half of 
$I_{k_{1}\ldots k_{n}}$, whereas in the second case the diameter of the hole is of order $|I_{k_{1}\ldots 
k_{n}}|/k_{n}$. 
Also, let us emphasize that by construction, the state $0$  necessarily  has to renew 
itself. That is, 
the generation following the interval $I_{k_1\cdots 
k_{n-1}0}$ is given by  $\{I_{k_1\cdots 
k_{n-1}0k_{n+1}}: k_{n+1} \in \N\}$.  Moreover,
note that the system can only be stationary at 
states $k_{n} \in \N$, which means that if $I_{k_{1}\ldots k_{n}}$ is a 
given interval of some level $n$ then $I_{k_{1}\ldots 
k_{n}k_{n}}$ exists if and only if $k_{n}\ne 0$.
Finally, note that we 
always assume that the intervals $I_{k_{1} \ldots k_{n}}$
are half open, namely closed to the left and open to the right.
  
\noindent With this inductive construction of the generating 
intervals $I_{k_{1}\ldots k_{n}}$ at hand, the Cantor-like set $\c$  
is defined by   
\[
\c:=\bigcap_{n\in\N} \bigcup_{I\in C_{n}} I.
\]
Next, we define the set of  dissipative points in $\c$.
For this we require the following canonical coding of the elements in
$\c$.  A 
finite or infinite sequence $(k_{1}, k_{2},\ldots)$ is called admissable 
if   $I_{k_1\cdots k_n} \in C_{n}$, for all $n \in \N$.
Clearly, the diameter of $I_{k_1\cdots k_n}$ tends  to zero 
as $n$ tends to $\infty$, for every fixed infinite 
admissable sequence $(k_n)_{n\in \N}$, and therefore, 
\[
\bigcap_{n=1}^\infty I_{k_1\cdots k_n} \, \, \hbox{is a singleton}.
\]
In particular, each $x\in \c$  is coded  uniquely by an infinite 
admissable sequence, and this gives rise to
the bijection
\[  \rho: \Sigma \to \c,  (k_{1},k_{2},\ldots) \mapsto  
\bigcap_{n=1}^\infty I_{k_1\cdots k_n},\]
where $\Sigma$ refers to the set of all admissable 
sequences.
Using this coding,  the set $\c_{\infty}\subset \c$ of dissipative points 
 is then given by
\[
\c_\infty :=\left\{x\in \c \, :\, x = \rho(k_{1},k_{2},\ldots) \, \, 
\hbox{and} \, \, \lim_{n\to\infty} k_{n} =\infty\right\}.
\]
The following theorem gives the main result of this paper. Here, 
$\dim_{H}$ refers to the Hausdorff dimension.

\begin{thm*}\label{main}
    For $\c$ and $\c_{\infty}$ as defined above, we have
    \[
\dim_H \left(\c_\infty\right) = \dim_{H} (\c)=1.
\]
\end{thm*}

\vspace{2mm}

\begin{remark}\label{rem1}   {\rm
    As already mentioned at the beginning of the introduction,  the sets $\c$ and 
    $\c_{\infty}$ 
    can alternatively be defined in terms of a certain interval map $$\Phi:
   \bigcup_{I_{k} \in 
    C_{1}} I_{k} \to [0,1).$$ 
Namely, the map $\Phi$ is given piecewise by    $\Phi|_{I_{k}}:=\phi_{k}$, 
for each $k \in \N$, where the maps $\phi_{k}:\bigcup_{I_{kl} \in 
C_{2}}I_{kl} \to I_{k}$  
are piecewise linear in the following sense. For each $k\in \N$ and 
$l \in \N_{0}$, the map  $\phi_{k}|_{I_{kl}}$ is a linear 
and
    \[ \phi_{k} |_{_{I_{kl}}} \left(I_{kl}  \right) =  \left\{
\begin{array}
{r@{\quad }l}     
\bigcup_{m=l-1}^{\infty} I_{km}&  \hbox{for}  \quad \,  l\neq 0 \\
  & \\
I_{k} \setminus I_{k0} &  \hbox{for}   \quad \, l = 0 .
\end{array} \right.\
\]
One then immediately verifies that the set $\c$ is  equal to the set of  
points which have  
an infinite forward orbit under $\Phi$.  Moreover, the centre $c_{k}$ of 
$I_{k}$ is the critical point of $\phi_{k}$, for each $k \in \N$, and 
thus 
$Crit(\Phi):= \{c_{k}:k\in \N\}$ represents the countable set of critical points 
of $\Phi$.  
With $\omega_{\Phi} (x)$ referring to the $\omega$-limit set of an 
element $x \in [0,1/2)$ with respect to $\Phi$ (that is, $\omega_{\Phi}(x)$ 
denotes the set of accumulation points of $\{\Phi^{n}(x): n \in \N\}$), 
and with
$\mathcal{B}(Crit(\Phi)) :=\{ x: 
\omega_{\Phi} (x) \subset Crit(\Phi) \}$ denoting  the basin of attraction of 
$Crit(\Phi)$ under 
$\Phi$, we then have
\[ \c_{\infty} =  \mathcal{B}(Crit(\Phi)).\]
 Similar types of interval maps have been studied for 
 instance in  \cite{BKNS}  and \cite{SV} in connection with `wild 
 Cantor sets' and the search for Julia sets of positive Lebesgue 
 measure.  It seems worthwhile to point out that the `Martingale Argument' of Keller (see 
 \cite{BKNS}, Section 4.1), which gives a criterion for the basin of 
 attraction of a critical point to be of positive Lebesgue measure,  is not applicable to
 the map $\Phi$ and hence does not allow to draw any conclusion 
 for the Lebesgue measure of $\c_{\infty}$. Nevertheless, 
 recent studies in the theory of Kleinian groups
 (cf. \cite{A} \cite{BCM} \cite{CG}, and also 
 \cite{C}) 
 have confirmed
 the Ahlfors Conjecture, and applying these  to 
 our situation here strongly  suggests that  $\c$ and $\c_{\infty}$ 
 are both of $1$-dimensional 
 Lebesgue measure equal to $0$. However, currently it is still a conjecture
 that the Lebesgue measure 
 of $\c$ and $\c_{\infty}$
 is equal to zero, and  it would be desirable to 
 have an 
 elementary proof of this conjecture.}
    \end{remark}

  \noindent  {\bf Acknowledgement:}
    We like to thank the {\em EPSRC}  for supporting a one 
    week visit of the first author to 
    the School of Mathematics at the University of
    St. Andrews.  During this visit we began with the work towards 
    this paper. Also, we like to thank the {\em Erwin Schršdinger 
    Institute, 
    Vienna} for supporting the workshop {\sl Ergodic Theory - Limit Theorems 
    and Dimensions} (Vienna, 17 - 21 December 2007)
    during which we made the finishing touches to this paper. Finally,
    we are grateful to
    Tatyana Turova for very helpful discussions concerning the random 
    walk argument which we employ here.

 \section{{\bf Proof of  the Main Theorem}}\label{proof}
\noindent Since $\c_\infty\subset \c \subset [0,1)$, we have
\[
\dim_H \left(\c_\infty\right)\le \dim_{H}\left(\c\right) \le 1.
\]
Therefore, our strategy will be to construct a family of probability 
measures $\mu_\a$  on $\c_\infty$ , for $1/2<\a<1$, such that the 
Hausdorff dimension  
$\dim_H(\mu_\a)$ of the measure $\mu_{\a}$ tends to $1$ for $\a$ tending to $1$. Clearly, this
will then 
be sufficient for the proof of the Main Theorem.

\subsection{The family of measures $\mu_{\a}$}

$\,$

\noindent Let $1/2< \a \le 1$ be fixed. We then define a set function 
$\mu_{\a}$ on the
intervals $I_{k_1\cdots k_n}$ by induction in the following way. 
Define $I_{0}:=[0,1)$ and set $I_{0k}:=I_{k}$ for all $k \in \N$.  
Then let
\[
\mu_\a(I_{0}) :=1, 
\]
and define $\mu_{\a}\left(I_{k_1\cdots 
k_{n}k_{n+1}}\right)$ for each finite admissable sequence $(k_1,\cdots 
,k_{n+1})$  as follows. With $\z(s):=\sum_{m=1}^{\infty} m^{-s}$ referring to the Riemann zeta 
function,  we define for $k_{n}\neq k_{n+1}$,
\[ 
\mu_{\a}\left(I_{k_1\cdots k_{n} 
k_{n+1}}\right):=  \left\{
\begin{array}
 {r@{\, \hbox{for}\,}l}   \frac{\mu_\a\left(I_{k_1\cdots k_n}\right)}{2\z 
(2\a)}\left(\frac{1}{|k_{n+1}-k_n|^{2\a}}+\frac{1}{(k_{n+1}+k_n)^{2\a}}\right)     &  k_{n+1}\ne 0 \\
\frac{\mu_\a\left(I_{k_1\cdots k_n}\right)}{2\z 
(2\a) }  \frac{1}{k_n^{2\a}}&   k_{n+1}=0.
\end{array} \right.\] 
Also,  for $k_{n}=k_{n+1}$ let
\[\mu_{\a}\left(I_{k_1\cdots 
k_nk_{n}}\right):= \frac{\mu_\a\left(I_{k_1\cdots k_n}\right)}{2\z 
(2\a) }  \frac{1}{(2k_n)^{2\a}}. \]
On the first sight, this definition of the set function $\mu_{\a}$ might appear to
be slightly artificial. However, in the next 
section we will see that this definition reflects  the transition probabilities 
of a certain (transient) random walk on $\N_{0}$, and therefore is 
rather canonical. Before we come to this, 
let us first state the following consistency property for $\mu_{\a}$. 
This property can also be deduced using the random walk 
of  Section \ref{random walk}. Nevertheless, the following gives an 
elementary proof of this 
consistency property.
\begin{lemma}\label{consistency}
For each finite admissable sequence $(k_1,\cdots ,k_n)$, we have
\[
\mu_\a\left(I_{k_1\cdots k_n}\right)=\sum_{k_{n+1}\ge 0}\mu_{\a}\left(I_{k_1\cdots k_nk_{n+1}}\right).
\]
\end{lemma}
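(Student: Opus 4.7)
The plan is to prove the identity by direct computation, splitting into two cases according to whether $k_n=0$ or $k_n\in\N$. The whole argument is essentially a bookkeeping exercise: rewrite the sum over $k_{n+1}\in\N_0$ as a pair of series of the form $\sum m^{-2\a}$ and use $\z(2\a)=\sum_{m\ge 1}m^{-2\a}$. After dividing through by $\mu_\a(I_{k_1\cdots k_n})$, everything reduces to showing that a certain numerical sum equals $2\z(2\a)$.

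In the easier case $k_n=0$, admissibility forces $k_{n+1}\in\N$, so the only relevant rule is the first one (with $k_n=0\ne k_{n+1}$). Here $|k_{n+1}-k_n|=k_{n+1}=k_{n+1}+k_n$, so each summand equals $\mu_\a(I_{k_1\cdots 0})/\bigl(\z(2\a)\, k_{n+1}^{2\a}\bigr)$, and the sum over $k_{n+1}\ge 1$ collapses to $\mu_\a(I_{k_1\cdots 0})$ by the definition of $\z(2\a)$. In the main case $k_n\in\N$, the admissible values of $k_{n+1}$ range over all of $\N_0$, and one applies the three defining formulas for the three sub-cases $k_{n+1}=0$, $k_{n+1}=k_n$, and the remaining $k_{n+1}\in\N\setminus\{k_n\}$. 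After factoring out $\mu_\a(I_{k_1\cdots k_n})/(2\z(2\a))$, the total becomes
\[
\frac{1}{k_n^{2\a}} + \frac{1}{(2k_n)^{2\a}} + \sum_{\substack{j\ge 1 \\ j\ne k_n}}\left(\frac{1}{|j-k_n|^{2\a}}+\frac{1}{(j+k_n)^{2\a}}\right).
\]
As $j$ runs over $\N\setminus\{k_n\}$, the values of $|j-k_n|$ cover $\{1,\dots,k_n-1\}$ (from $j<k_n$) together with $\N$ (from $j>k_n$); and the values of $j+k_n$ cover $\{k_n+1,k_n+2,\dots\}\setminus\{2k_n\}$. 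Re-indexing, the bracketed sum becomes
\[
\sum_{m=1}^{k_n-1}\frac{1}{m^{2\a}}+\sum_{m=1}^{\infty}\frac{1}{m^{2\a}}+\sum_{m=k_n+1}^{\infty}\frac{1}{m^{2\a}}-\frac{1}{(2k_n)^{2\a}}.
\]
Adding the two boundary terms $k_n^{-2\a}$ and $(2k_n)^{-2\a}$ from the $k_{n+1}\in\{0,k_n\}$ contributions, the $(2k_n)^{-2\a}$ cancels and the first and third series combine with $k_n^{-2\a}$ into a full $\z(2\a)$; the middle series is a second $\z(2\a)$. Hence the total equals $2\z(2\a)$, which is exactly the normalizing factor.

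The only real obstacle is the combinatorics of re-indexing: the defining formula is carefully designed so that the "missing" term at $j=k_n$ in the reflected series $\sum(j+k_n)^{-2\a}$ is compensated by the stationary rule $\mu_\a(I_{k_1\cdots k_n k_n})\propto (2k_n)^{-2\a}$, and the "missing" term at $j=0$ in the translated series $\sum|j-k_n|^{-2\a}$ is compensated by the $k_{n+1}=0$ rule $\mu_\a(I_{k_1\cdots k_n 0})\propto k_n^{-2\a}$. Once these matches are observed, the verification is completely algebraic.
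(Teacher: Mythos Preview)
Your proof is correct and follows essentially the same approach as the paper: both split into the cases $k_n=0$ and $k_n\in\N$, factor out $\mu_\a(I_{k_1\cdots k_n})/(2\z(2\a))$, and re-index the resulting sums over $|k_{n+1}-k_n|$ and $k_{n+1}+k_n$ to assemble two full copies of $\z(2\a)$. Your write-up is in fact a bit more explicit than the paper's about how the boundary contributions at $k_{n+1}=0$ and $k_{n+1}=k_n$ fill in the gaps at $m=k_n$ and $m=2k_n$, but the underlying computation is identical.
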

\begin{proof}
For $k_{n}=0$, we have
\[ \sum_{k_{n+1}\ge 0 \atop k_{n+1}\ne 
0}\mu_{\a}\left(I_{k_1\cdots k_{n-1} 0 k_{n+1}}\right)
= \frac{\mu_\a\left(I_{k_1\cdots k_{n-1} 0}\right)}{2\z 
(2\a)} \sum_{l=1}^{\infty} \frac{2}{l^{2\a}} = \mu_\a\left(I_{k_1\cdots 
k_{n-1} 0}\right).\]
If $k_{n}\neq 0$, then we compute
\begin{align*}
&\sum_{k_{n+1}\ge 0}\mu_{\a}\left(I_{k_1\cdots 
k_nk_{n+1}}\right)\\&
=\sum_{k_{n+1}>0 \atop k_{n+1}\ne k_n}\frac{1}{2\z 
(2\a)}\left(\frac{1}{|k_{n+1}-k_n|^{2\a}}+\frac{1}{(k_{n+1}
+k_n)^{2\a}}\right)
\mu_\a\left(I_{k_1\cdots 
k_n}\right)+\\&
\phantom{======}+\frac{1}{2\z 
(2\a) k_n^{2\a}}\mu_\a\left(I_{k_1\cdots k_n}\right) +
\frac{1}{2\z 
(2\a) (2k_n)^{2\a}}\mu_\a\left(I_{k_1\cdots k_n}\right)\\&
=\frac{\mu_\a\left(I_{k_1\cdots k_n}\right)}{2\z 
(2\a)}\left(\sum_{k_{n+1}>0 \atop k_{n+1} \ne 
k_{n}}\Big(\frac{1}{|k_{n+1}-k_n|^{2\a}}+\frac{1}{(k_{n+1}
+k_n)^{2\a}}\right)
+\\&
\phantom{======}+\frac{1}{k_n^{2\a}}
+\frac{1}{(2k_n)^{2\a}}\Big)\\&
=\frac{\mu_\a\left(I_{k_1\cdots k_n}\right)}{2\z 
(2\a)}\Big(\sum_{k_{n+1}>k_n}
\frac{1}{(k_{n+1}-k_n)^{2\a}}+\sum_{0<k_{n+1}<k_n}
\frac{1}{(k_{n}-k_{n+1})^{2\a}}+\\&
\phantom{==}+\sum_{0<k_{n+1}<k_{n}}\frac{1}{(k_{n+1}+k_n)^{2\a}}
+\sum_{k_{n+1}>k_{n}}\frac{1}{(k_{n+1}+k_n)^{2\a}}+\frac{1}{k_n^{2\a}} 
+\frac{1}{(2k_n)^{2\a}}\Big)\\&
=\frac{\mu_\a\left(I_{k_1\cdots k_n}\right)}{2\z (2\a)}\cdot 
2\sum_{l=1}^\infty\frac{1}{l^{2\a}}=\mu_\a\left(I_{k_1\cdots 
k_n}\right).
\end{align*} \end{proof}
\noindent The 
following  is an immediate consequence of the previous lemma.

\begin{corollary}\label{C}
 The measure  $\mu_{\a}$ 
    is a probability measure on $\c$.    
    \end{corollary}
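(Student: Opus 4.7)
The plan is to extend the set function $\mu_\alpha$, currently defined only on the fundamental intervals $I_{k_1\cdots k_n}$, to a genuine Borel probability measure on $\mathcal{C}$ by means of a standard extension theorem; Lemma \ref{consistency} is precisely the hypothesis such a theorem requires.

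First I would verify by induction on $n$ that $\sum_{I\in C_n}\mu_\alpha(I)=1$. The base case $n=0$ is the definition $\mu_\alpha(I_0)=1$, and the inductive step follows immediately from Lemma \ref{consistency} applied to each interval in $C_{n-1}$ (with the convention $I_{0k}=I_k$ handling the first-level step). Thus at every level the fundamental intervals carry a finitely additive probability on the finite disjoint unions in $C_n$.

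Next I would transfer the construction to the symbolic space $\Sigma$ via the coding bijection $\rho:\Sigma\to\mathcal{C}$. The preimages $[k_1,\ldots,k_n]:=\rho^{-1}(I_{k_1\cdots k_n})$ form a semi-algebra of cylinders generating the product $\sigma$-algebra on $\Sigma$, and Lemma \ref{consistency} is exactly the Kolmogorov compatibility condition for the family of finite-dimensional distributions assigned by $\mu_\alpha$. Kolmogorov's extension theorem then produces a unique probability measure $\widetilde{\mu}_\alpha$ on $\Sigma$ whose value on each cylinder $[k_1,\ldots,k_n]$ equals $\mu_\alpha(I_{k_1\cdots k_n})$. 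Pushing $\widetilde{\mu}_\alpha$ forward under $\rho$ yields a Borel probability measure on $\mathcal{C}$, which by construction agrees with $\mu_\alpha$ on every fundamental interval. Because $\mathcal{C}=\bigcap_n\bigcup_{I\in C_n}I$ and each level has total mass $1$, the resulting measure is concentrated on $\mathcal{C}$, giving $\mu_\alpha(\mathcal{C})=1$.

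I do not anticipate any substantive obstacle: once Lemma \ref{consistency} is in hand, the corollary is a routine application of Kolmogorov/Carath\'eodory extension, the only mild subtlety being the verification that the nested half-open intervals $I_{k_1\cdots k_n}$ with diameters tending to zero form a semi-ring on which finite additivity automatically upgrades to countable additivity.
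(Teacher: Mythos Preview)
Your proposal is correct and is precisely the standard justification the paper has in mind: the paper states the corollary as ``an immediate consequence of the previous lemma'' without further argument, and the Kolmogorov/Carath\'eodory extension you spell out is exactly what makes that immediacy legitimate.
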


\subsection{The associated random walk}\label{random walk} 

$\,$

\noindent In this section we show that the measure $\mu_{\a}$ can be 
interpreted in terms of a certain random 
walk. In particular, this will give 
that $\mu_{\a}$ has the Markov property. For this, let the random variables 
$\tilde{X}^\a_n$ be defined by the probability (with respect to 
$\mu_\a$)  being in 
the interval $I_{k_1\cdots k_nk_{n+1}}$ given that in the previous step 
the process has been in the interval $I_{k_1\cdots
  k_n}$.  That is, the random variables 
$\tilde{X}^\a_n$ is given as follows.
\begin{itemize}
    \item
  For $k_{n+1}>0$ such that $k_{n+1} \neq k_{n}$, let
\begin{align*}
\P&\left(\tilde{X}^\a_{n+1}=I_{k_1\cdots k_nk_{n+1}}\, |\, 
\tilde{X}^\a_n=I_{k_1\cdots
    k_n}\right):=\frac{\mu_{\a}\left(I_{k_1\cdots
        k_nk_{n+1}}\right)}{\mu_{\a}\left(I_{k_1\cdots
        k_n}\right)}\\&
=\frac{1}{2\z 
(2\a)}\left(\frac{1}{|k_{n+1}-k_n|^{2\a}}+\frac{1}{(k_{n+1}+k_n)^{2\a}}\right).
\end{align*}
\item For $k_{n+1}>0$ such that $k_{n+1} = k_{n}$, let
\[
\P \left(\tilde{X}^\a_{n+1}=I_{k_1\cdots k_nk_{n}}\, |\, 
\tilde{X}^\a_n=I_{k_1\cdots
    k_n}\right):=\frac{\mu_{\a}\left(I_{k_1\cdots
	k_nk_{n}}\right)}{\mu_{\a}\left(I_{k_1\cdots
	k_n}\right)}
=\frac{1}{2\z (2\a)}\frac{1}{(2k_n)^{2\a}}.
\]
\item If $k_{n+1}=0$, then
\[
\P \left(\tilde{X}^\a_{n+1}=I_{k_1\cdots k_n0}\, |\, 
\tilde{X}^\a_n=I_{k_1\cdots
    k_n}\right):=\frac{\mu_{\a}\left(I_{k_1\cdots
        k_n0}\right)}{\mu_{\a}\left(I_{k_1\cdots
        k_n}\right)}
=\frac{1}{2\z (2\a)}\frac{1}{k_n^{2\a}}.
\]
\end{itemize}
Clearly, these conditional probabilities do not depend on 
  $k_1,\cdots ,k_{n-1}$. Hence, we can define an associated random walk 
  $X_n^\a$ on $\N_{0}$ by 
the following transition probabilities.
\begin{itemize}
    \item
For $l,m \in \N_{0}$, let
\[ 
\P (X^\a_{n+1}=l\, |\, X^\a_n=m):=  \left\{
\begin{array}
 {r@{\quad \hbox{for}\quad}l}     
 \frac{1}{2\z 
 (2\a)}\left(\frac{1}{|m-l|^{2\a}}+\frac{1}{(m+l)^{2\a}}\right) & l \ne 
 0,  l\ne 
 m\\
 \frac{1}{2\z (2\a)}\frac{1}{(2m)^{2\a}} &   l \neq 0, l=m\\
 \frac{1}{2\z (2\a)}\frac{1}{m^{2\a}}&   l=0,m\neq 0\\
 0 & l=m=0.
 \end{array} \right.\] 
 \end{itemize}
 The random walk $X_n^\a$ is very closely connected to our original 
geometric setting, since it allows to
recover the measure $\mu_\a$ as follows.
\begin{align*}
&\P(X^\a_1=k_1, \cdots , X^\a_n=k_n)\\&=\P(X^\a_n=k_n\, |\, 
X^\a_{n-1}=k_{n-1})\cdot\P(X^\a_{n-1}=k_{n-1}\, |\, 
X^\a_{n-2}=k_{n-2}) \cdot \\&\phantom{====} . \,  . \,  . \, \cdot\P(X^\a_1=k_1\, 
|\, X^\a_0=0)\\&
=\frac{\mu_{\a}\left(I_{k_1\cdots
        k_n}\right)}{\mu_{\a}\left(I_{k_1\cdots
        k_{n-1}}\right)}\frac{\mu_{\a}\left(I_{k_1\cdots
        k_{n-1}}\right)}{\mu_{\a}\left(I_{k_1\cdots
        k_{n-2}}\right)}\cdots\frac{\mu_{\a}\left(I_{k_1}\right)}{\mu_{\a}\left([0,1)\right)} 
=\mu_{\a}\left(I_{k_1\cdots
        k_n}\right).
\end{align*}
The aim now is to show that the random walk $X_n^\a$ is transient. This will 
then allow us to deduce that 
 $\mu_\a$ is non-trivial on $\c_\infty$.
\begin{theorem}\label{RW}
For each  $1/2<\a<1$, the random walk $X_n^{\a}$ on $\N_{0}$ is transient.  That is, we  have 
$\P$-almost 
surely,
\[
\lim_{n\to\infty}X_n^{\a}= \infty.
\]
\end{theorem}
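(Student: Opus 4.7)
My plan is to realize $X_n^\a$ as the absolute value of a symmetric random walk on $\Z$ and then to establish transience of that walk via a Fourier estimate.

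First I would introduce the auxiliary walk $(Z_n)_{n\ge 0}$ on $\Z$, started at $Z_0=0$, with i.i.d.\ symmetric increments $\xi_i$ of distribution
\[\P(\xi=k)=\frac{1}{2\z(2\a)|k|^{2\a}},\qquad k\in\Z\setminus\{0\}.\]
A direct case check shows that $Y_n:=|Z_n|$ is a Markov chain on $\N_0$ whose one-step transition probabilities from $m$ to $l$ agree exactly with those of $X_n^\a$ listed before the theorem: the term $|m-l|^{-2\a}$ corresponds to the increment $\xi=l-m$, while the additional term $(m+l)^{-2\a}$ corresponds to $\xi=-(m+l)$; the boundary cases $l=m\ne 0$ and $l=0$ then drop out automatically (at $l=m$ only the second pre-image contributes because $\xi=0$ is forbidden, and at $l=0$ the two pre-images coincide so one divides by $2$). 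Hence $(X_n^\a)\stackrel{d}{=}(|Z_n|)$, and since an irreducible random walk on $\Z$ is transient iff $|Z_n|\to\infty$ almost surely, it suffices to prove that $Z_n$ is transient.

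To do this I would invoke the standard Chung--Fuchs criterion: $Z_n$ is transient iff $\int_{-\pi}^{\pi}\bigl(1-\phi(t)\bigr)^{-1}\,dt<\infty$, where
\[\phi(t)=\frac{1}{\z(2\a)}\sum_{k\ge 1}\frac{\cos(tk)}{k^{2\a}}\]
is the characteristic function of $\xi$. Since $1-\phi$ is bounded below by a positive constant on any compact subset of $(-\pi,\pi)\setminus\{0\}$, only the behaviour near $t=0$ matters, and the heart of the argument is the two-sided asymptotic
\[1-\phi(t)=\frac{1}{\z(2\a)}\sum_{k\ge 1}\frac{1-\cos(tk)}{k^{2\a}}\asymp |t|^{2\a-1}\qquad\text{as }t\to 0.\]
I would split the sum at $k\sim 1/|t|$: for $k\le 1/|t|$ the estimate $1-\cos(tk)\asymp(tk)^2$ yields a contribution of order $t^2\cdot|t|^{2\a-3}=|t|^{2\a-1}$; for $k>1/|t|$ the upper bound $1-\cos(tk)\le 2$ combined with the matching lower bound $1-\cos(tk)\ge 1$ on a set of positive density in $k$ (namely those $k$ with $tk\bmod 2\pi\in(\pi/2,3\pi/2)$) yields a tail contribution also of order $|t|^{2\a-1}$.

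Once this is in hand, $\int_{-\delta}^{\delta}|t|^{-(2\a-1)}\,dt$ is finite exactly when $2\a-1<1$, i.e.\ when $\a<1$, which gives transience of $Z_n$ and hence of $X_n^\a$. The principal technical obstacle is the two-sided sharpness of the Fourier asymptotic, which is needed because $\a=1$ is the critical threshold: at $\a=1$ the increments lie in the domain of attraction of a Cauchy law, the walk on $\Z$ becomes recurrent, and $\int|t|^{-1}\,dt$ diverges, so any slack in the estimate would break the argument. A more streamlined route would be simply to cite the classical fact that a symmetric random walk on $\Z$ whose increments are in the domain of normal attraction of a symmetric stable law of index $\gamma\in(0,1)$ (here $\gamma=2\a-1$) is transient; but an explicit Fourier calculation is in keeping with the elementary spirit of the paper.
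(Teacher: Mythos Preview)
Your proof is correct and follows essentially the same route as the paper: both realize $X_n^\a$ as $|Z_n|$ for the symmetric heavy-tailed walk on $\Z$ with step law $\P(\xi=k)\propto |k|^{-2\a}$, verify that the transition probabilities of $|Z_n|$ match those of $X_n^\a$, and then deduce transience of $X_n^\a$ from transience of $Z_n$. The only difference is that the paper simply cites the transience of $Z_n$ for $2\a<2$ as ``well known'', whereas you supply an explicit Chung--Fuchs Fourier argument (or, alternatively, the stable-domain-of-attraction fact) to justify it.
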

\begin{proof}
Let $1< \b <2$ be fixed, and consider the Cauchy-type random walk $Y^\b_n$ on 
$\Z$, given by the transition probabilities
\[
\P(Y^{\b}_{n+1}=m+l\, |\, Y^{\b}_n=m):=\frac{1}{2\z (\b)} \frac{1}{|l|^{\b}} \, \, \hbox{for} \, \,  n \in \N, m \in \Z \, 
\, \hbox{and} \, \, l\in\Z\setminus\{0\}.
\]
It is well known that $Y^\b_n$ is symmetric, and that $Y^\b_n$ is transient 
if and only if 
$\b<2$.  Let $\tilde{Y}^\b_n$ 
denote the random walk which arises from $Y^\b_n$ in the following 
way. Let $l_1, \cdots ,l_n$ be the sequence of jumps of $Y^\b_n$
after the first $n$ steps. 
With $r_1$ referring to the first time at which $Y^\b_k$ crosses $0$, we set 
$\tilde{Y}^\b_{r_1}:=|Y^\b_{r_1}|$. Subsequently,  we apply the jumps 
$|l_{r_1+1}|$ up to $|l_{r_2}|$, where $r_2$ refers to the next time $Y^\b_n$ 
crosses $0$. After that, we apply the jumps $|l_{r_2+1}|$ up to 
$|l_{r_3}|$, where  
$r_3$ denotes the next  time at which the process $Y^\b_k$ first crosses 
$0$ again. 
More precisely, with $r_i$ referring to 
the $i$-th time the process $Y^\b_k$ crosses $0$, we let 
$\tilde{Y}^\b_{r_i}:=|Y^\b_{r_i}|$, and between each  two consecutive  
crossings $r_{i}$ and $r_{i+1}$
 we define the jumps of $\tilde{Y}^\b_{r_i}$ to be
$|l_{r_i+1}|, \ldots, |l_{r_{i+1}}|$.  Note that the random 
walk we have just described is equal to the random walk $|Y^\b_n|$.
Also, note that since 
$Y^\b_n$ is a symmetric random walk,  the above modification of the sample 
path does not alter its probability. 
Therefore,
it immediately follows that the transience of
$Y^\b_n$ implies that $\tilde{Y}^\b_n$  is 
 transient. Using the fact that by symmetry of  $Y^\b_n$ we have 
$\P(Y^\b_n=m)=\P(Y^\b_n=-m)$,
we now  compute the transition probabilities of the random walk 
$\tilde{Y}^\b_n$ on $\N_{0}$ as follows. 
For 
$l, m\in \N_{0}$ such that $l \ne 
 0,$ and $ m\ne 
 l$, we have 
\begin{align*}
&\P(\tilde{Y}^\b_{n+1}=l\, |\, 
\tilde{Y}^\b_n=m)=\P(|Y^\b_{n+1}|=l\, |\, |Y^\b_n|=m)\\&
=\frac{\P(|Y^\b_{n+1}|=l,\, Y^\b_n=m\text{ or } 
Y^\b_n=-m)}{\P(Y^\b_n=m\text{ or } Y^\b_n=-m)}\\&
=\frac{\P(|Y^\b_{n+1}|=l,\, 
Y^\b_n=m)}{2\P(Y^\b_n=m)}+\frac{\P(|Y^\b_{n+1}|=l,\, 
Y^\b_n=-m)}{2\P(Y^\b_n=-m)}\\&
=\frac{\P(|Y^\b_{n+1}|=l , \, Y^\b_n=m)}{\P(Y^\b_n=m)}
=\frac{1}{2\z 
(\b)}\left(\frac{1}{|m-l|^{\b}}+\frac{1}{(m+l)^{\b}}\right).
\end{align*}
Similarly, we obtain for $l=m \neq 0$,
\[
\P(\tilde{Y}^\b_{n+1}=m \, |\, 
\tilde{Y}^\b_n=m)=\frac{1}{2\z 
(\b)}\frac{1}{(2m)^{\b}},
\]
and for $l =0$ and  $m \neq l$,
\[
\P(\tilde{Y}^\b_{n+1}=0 \, |\, 
\tilde{Y}^\b_n=m)=\frac{1}{2\z 
(\b)}\frac{1}{m^{\b}}.
\]
Finally, note that 
we immediately have  
\[\P(\tilde{Y}^\b_{n+1}=0\, |\, 
\tilde{Y}^\b_n=0) =0.\]
This shows that the  transition probabilities of  $\tilde{Y}^\b_n$ coincide 
with the ones of
$X^{\beta/2}_n$. Therefore, since  $\tilde{Y}^\b_n$ is transient, 
it follows that $X_n^{\beta/2}$ is transient.
This finishes the proof of the theorem.
\end{proof}

\noindent As already announced before,  Theorem~\ref{RW}  has the following
important implication.
\begin{corollary}\label{transience}
For every $1/2<\a<1$, we have  
\[
\mu_\a(\c_\infty)=1.
\]
\end{corollary}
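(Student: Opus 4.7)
The plan is simply to translate the transience statement from Theorem~\ref{RW} through the coding $\rho: \Sigma \to \c$. The key observation is that by construction of $\mu_\a$ (and the chain of equalities displayed just before Theorem~\ref{RW}), the pushforward measure $\rho^{-1}_* \mu_\a$ on the sequence space $\Sigma \subset \N_0^{\N}$ is precisely the law of the process $(X_1^\a, X_2^\a, \ldots)$ started at $X_0^\a = 0$: for every finite admissable sequence $(k_1, \ldots, k_n)$ the cylinder probability equals $\P(X_1^\a = k_1, \ldots, X_n^\a = k_n)$, and by the Kolmogorov extension theorem (or simply by uniqueness of the measure determined by its values on cylinders) this identifies the two measures on all of $\Sigma$.

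Next, I would rewrite $\c_\infty$ in terms of $\rho$. By definition,
\[
\c_\infty = \rho\bigl(\{(k_n)_{n\in\N} \in \Sigma : \lim_{n\to\infty} k_n = \infty\}\bigr),
\]
and since $\rho$ is a bijection,
\[
\mu_\a(\c_\infty) = \P\bigl(\lim_{n\to\infty} X_n^\a = \infty\bigr).
\]

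Finally, I would invoke Theorem~\ref{RW}: for $1/2 < \a < 1$, the random walk $X_n^\a$ is transient, which means precisely that $\lim_{n\to\infty} X_n^\a = \infty$ holds $\P$-almost surely. Plugging this into the displayed identity yields $\mu_\a(\c_\infty) = 1$, completing the proof.

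There is essentially no obstacle here beyond bookkeeping. The only point that deserves a brief verification is that the event $\{\lim_n X_n^\a = \infty\}$ corresponds under $\rho$ to a genuinely $\mu_\a$-measurable subset of $\c$, which is immediate since it is the countable intersection $\bigcap_{M\in\N} \bigcup_{N\in\N} \bigcap_{n\geq N}\{X_n^\a \geq M\}$ of cylinder events, each of which pulls back to a Borel subset of $\c$ via $\rho$. All the real work has already been done in Lemma~\ref{consistency}, Corollary~\ref{C}, and Theorem~\ref{RW}.
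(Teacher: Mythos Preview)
Your proposal is correct and is exactly the intended argument: the paper states Corollary~\ref{transience} without proof as an immediate consequence of Theorem~\ref{RW}, and what you have written is precisely the natural unpacking of that implication via the identification $\mu_\a\left(I_{k_1\cdots k_n}\right)=\P(X^\a_1=k_1,\ldots,X^\a_n=k_n)$ displayed just before Theorem~\ref{RW}. There is nothing to add.
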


\begin{remark}
    {\rm
Note that the proof of Theorem~\ref{RW}  relies heavily on the 
fact that $1/2< \a <1$. 
Namely, for instance for $\a=1$ the associated random walk is recurrent, 
and consequently
the 
measure $\mu_{\a}$ vanishes on $\c_\infty$.}
\end{remark}

\subsection{Approximating  the essential support of $\mu_{\a}$}

$\,$

\noindent In order to prepare our estimate of the lower pointwise dimension
of $\mu_{\a}$, we need a further approximation of the 
essential 
support of this measure. We will see that $\mu_{\a}$-almost 
surely the diameters of the coding intervals of an element of $\c_{\infty}$ do 
not shrink too fast. For this  we define, for $\g \in \R$,
\[
\c_\infty^{\g}:=\left\{x\in \c_\infty\, :\, x=\r (k_1,k_{2}, \cdots) \text{ 
such that 
}\limsup_{n\to\infty}\frac{|k_{n+1}-k_n|}{n^\g}\le 1\right\}.
\]
\begin{lemma}\label{slow}
For each $1/2<\a <1$ and $\g>1/(2\a-1)$, we have
\[
\mu_\a(\c_\infty^{\g})=1.
\]
\end{lemma}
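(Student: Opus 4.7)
The plan is to translate the statement into one about the random walk $X_n^\a$ of Section~\ref{random walk} and then apply a straightforward Borel--Cantelli argument, exploiting the tail bound $\sum_{k>N}k^{-2\a}\asymp N^{1-2\a}$ available for $\a>1/2$.

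First, I would use the identification $\mu_\a(I_{k_1\cdots k_n})=\P(X_1^\a=k_1,\ldots,X_n^\a=k_n)$ established in Section~\ref{random walk} to rewrite the condition defining $\c_\infty^\g$ in terms of the random walk. Combined with Corollary~\ref{transience} which gives $\mu_\a(\c_\infty)=1$, it suffices to show that
\[
\P\Bigl(\limsup_{n\to\infty}\frac{|X_{n+1}^\a-X_n^\a|}{n^\g}\le 1\Bigr)=1.
\]

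Next, I would obtain a uniform one-step bound. Inspection of the four cases in the definition of the transition probabilities shows that for every $m\in\N_0$ and every $l\in\N_0$ with $l\neq m$,
\[
\P(X_{n+1}^\a=l\,|\,X_n^\a=m)\le \frac{1}{\z(2\a)}\cdot\frac{1}{|m-l|^{2\a}},
\]
since $(m+l)^{2\a}\ge|m-l|^{2\a}$ when both $l,m\ge 0$, the $l=0$ case reduces to this bound with $|m-l|=m$, and the diagonal $l=m$ is irrelevant for our purposes (it contributes a jump of size $0$). Summing over $l$ with $|l-m|>n^\g$ and using the standard tail estimate $\sum_{k>N}k^{-2\a}\le C_\a\,N^{1-2\a}$, valid because $2\a>1$, I obtain
\[
\P\bigl(|X_{n+1}^\a-X_n^\a|>n^\g\,\big|\,X_n^\a=m\bigr)\le C'_\a\,n^{-\g(2\a-1)},
\]
with a constant $C'_\a$ independent of $m$. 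Taking expectations yields the unconditional estimate $\P(|X_{n+1}^\a-X_n^\a|>n^\g)\le C'_\a\,n^{-\g(2\a-1)}$.

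Finally, the hypothesis $\g>1/(2\a-1)$ gives $\g(2\a-1)>1$, so $\sum_n n^{-\g(2\a-1)}<\infty$. By the first Borel--Cantelli lemma, $\P$-almost surely only finitely many of the events $\{|X_{n+1}^\a-X_n^\a|>n^\g\}$ occur; equivalently, $|X_{n+1}^\a-X_n^\a|\le n^\g$ for all sufficiently large $n$, so $\limsup_n|X_{n+1}^\a-X_n^\a|/n^\g\le 1$. Intersecting this full-measure set with $\c_\infty$ (also of full $\mu_\a$-measure by Corollary~\ref{transience}) yields $\mu_\a(\c_\infty^\g)=1$. The only delicate point is the case analysis leading to the uniform one-step bound, since the transitions from states $l$ or $m$ equal to $0$ have a slightly different form and must be checked to fit under the same $|m-l|^{-2\a}$ envelope; everything else is routine.
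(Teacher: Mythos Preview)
Your argument is correct and follows essentially the same route as the paper's proof: both reduce to the first Borel--Cantelli lemma via the tail estimate $\P(|X_{n+1}^\a-X_n^\a|>n^\g)\le C\,n^{-\g(2\a-1)}$, which is summable precisely when $\g>1/(2\a-1)$. The only cosmetic difference is that the paper obtains this tail bound by comparing $X^\a=\tilde Y^{2\a}$ with the unreflected symmetric walk $Y^{2\a}$ (whose increments have exactly the distribution $\tfrac{1}{2\zeta(2\a)}|l|^{-2\a}$), whereas you bound the transition probabilities of $X^\a$ directly; both lead to the same estimate.
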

\begin{proof}
The proof is an easy consequence of the Borel-Cantelli lemma. Indeed, first
note that for $\b=2\a$ and $k \in \N$ we have
\[
\P(|Y^{\b}_{n}- Y^{\b}_{n-1}| \ge k) \ge 
 \P(|\tilde{Y}^{\b}_{n}-\tilde{Y}^{\b}_{n-1}|\ge k).
\]
The latter is an immediate consequence of the fact that the random walk  $Y^\b_n$ has the same 
distribution as $\tilde{Y}^\b_n$, but without
reflections at $0$. Hence, it suffices to prove the lemma for the
symmetric random walk $Y^\b_n$.
For this we define
\[
p_n^\g:=\P(|Y^\b_n-Y^\b_{n-1}|\ge n^\g).
\]
We then have, for each $n \in \N$ and with $c(\b)>0$ referring to
some universal constant, 
\[
p_n^\g=2\sum_{k=n^\g}^\infty\frac{1}{k^\b}\le 
c(\b)\frac{1}{n^{\g(\b-1)}}.
\]
Since the series $\sum_{n=1}^\infty 
p_n^\g$
converges for $\g>1/(\b-1)$,  the Borel-Cantelli Lemma implies that $\P$-almost 
surely there are at most finitely many $n$ which satisfy the inequality
\[
|Y^\b_n-Y^\b_{n-1}|\ge n^\g.
\]
This shows that for $\mu_{\a}$-almost every 
 $x = \r(k_{1},k_{2},\ldots) \in \c$ we have
\[
\limsup_{n\to\infty}\frac{|k_{n+1}-k_n|}{n^\g}\le 1. 
\]
\end{proof}

\subsection{The lower pointwise dimension on fundamental intervals}

$\,$

\noindent The main result of this section will be the following estimate 
for the lower 
pointwise dimension  of the measure $\mu_\a$ restricted to the fundamental intervals 
$I_{k_{1}\ldots k_{n}}$. 

\begin{proposition}\label{lpd}
    For each $\e >0$ there exists $1/2<\a <1$ and $\g>1/(2\a-1)$ such 
    that for 
   every $x= \r(k_{1},k_{2}, \ldots ) \in \c_{\infty}^{\g}$,    
 \[  \liminf_{n \to \infty} \frac{\log\mu_\a(I_{k_1\cdots k_n})}{\log|I_{k_1\cdots
    k_n}|}  \geq \a -  \e.\]
 Furthermore, in here we have that  $\a$ tends to $1$ for $\e$ 
tending to $0$.
    
    \end{proposition}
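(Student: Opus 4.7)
The plan is to telescope both $-\log\mu_\a(I_{k_1\cdots k_n})$ and $-\log|I_{k_1\cdots k_n}|$ one step at a time, using the explicit formulas for the transitions of $X_n^\a$ from Section~\ref{random walk} and the construction of $C_{n+1}$, and then to compare the two sums. From the four cases in the construction, every one-step diameter ratio has the form $|I_{k_1\cdots k_j}|/|I_{k_1\cdots k_{j-1}}|=(3/\pi^2)d_j^{-2}$, where the ``denominator'' $d_j$ is $|k_j-k_{j-1}|$, $2k_{j-1}$, $k_{j-1}$, or $k_j$ according to whether step $j$ falls in case~1, 2, 3, or 4, and hence
\[
-\log|I_{k_1\cdots k_n}|\ =\ n\log\tfrac{\pi^2}{3}+2S_n,\qquad S_n:=\sum_{j=1}^n\log d_j.
\]
The transition probabilities of $\mu_\a$ yield the parallel decomposition
\[
-\log\mu_\a(I_{k_1\cdots k_n})\ =\ n\log(2\z(2\a))+2\a S_n-\sum_{j=1}^n E_j,
\]
where the error $E_j\in[0,\log 2]$ vanishes in cases~2 and~3, equals $\log 2$ in case~4 (which happens only while $k_{j-1}=0$, hence finitely often on $\c_\infty$), and in case~1 equals $\log\bigl(1+(d_j/(k_{j-1}+k_j))^{2\a}\bigr)$.

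The central step -- and the main obstacle -- is the Ces\`aro estimate $\bar E_n:=n^{-1}\sum_{j=1}^n E_j\to 0$ as $n\to\infty$ for \emph{every} $x\in\c_\infty^\g$. Using $\log(1+x)\leq x$, the case~1 contribution is dominated by $(d_j/(k_{j-1}+k_j))^{2\a}$. For any $\d>0$ I would split $\{1,\ldots,n\}$ into the ``small-jump'' set $\{d_j<\d(k_{j-1}+k_j)\}$, which contributes at most $\d^{2\a}\cdot n$ to $\sum E_j$, and its complement $A_n(\d)$. At each $j\in A_n(\d)$ the ratio $k_j/k_{j-1}$ (or its reciprocal) exceeds a fixed constant $>1$, so such a step contributes at least some $\eta(\d)>0$ to the accumulated variation of $\log k_j$. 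The hypotheses $x\in\c_\infty$ (forcing $k_j\to\infty$, so all $j\geq N$ fall in cases~1 or~2) and $x\in\c_\infty^\g$ (giving $d_j\leq(1+o(1))j^\g$) together imply $k_n\leq k_N+\sum_{j>N}d_j\leq C(x)n^{\g+1}$ and hence $\log k_n=O(\log n)$; combining this with an up--down-variation bookkeeping of $\log k_j$ is the delicate point that forces $|A_n(\d)|=o(n)$, whence $\bar E_n\leq\d^{2\a}+o(1)$ and then $\d\to 0$ gives the claim. Extracting this uniformly over all $x\in\c_\infty^\g$, rather than only $\mu_\a$-almost surely, is what makes this step subtle.

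With $\bar E_n\to 0$ in hand, set $q_n:=2S_n/n\geq 0$ and write the ratio as
\[
\frac{-\log\mu_\a(I_{k_1\cdots k_n})}{-\log|I_{k_1\cdots k_n}|}\ \geq\ \frac{A(\a)-\bar E_n+\a q_n}{B+q_n},\qquad A(\a):=\log(2\z(2\a)),\ B:=\log\tfrac{\pi^2}{3}.
\]
As a linear-fractional function of $q\geq 0$ the right-hand side is monotone, with infimum $\min\!\bigl(\a,(A(\a)-\bar E_n)/B\bigr)$. Since $\z$ is strictly decreasing on $(1,\infty)$ and $2\z(2)=\pi^2/3$, one has $A(\a)>B$ for every $\a\in(1/2,1)$, so $A(\a)/B>1>\a$; as $\bar E_n\to 0$ the quotient $(A(\a)-\bar E_n)/B$ eventually exceeds $\a$, the minimum equals $\a$, and therefore
\[
\liminf_{n\to\infty}\frac{\log\mu_\a(I_{k_1\cdots k_n})}{\log|I_{k_1\cdots k_n}|}\ \geq\ \a\ \geq\ \a-\e.
\]
Given $\e>0$ we may choose $\a\in(1/2,1)$ as close to $1$ as we wish (say $\a=1-\e/2$) and take $\g>1/(2\a-1)$ from Lemma~\ref{slow}, so $\a\to 1$ as $\e\to 0$, completing the proof.
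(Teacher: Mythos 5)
Your telescoping decomposition and the final linear-fractional bookkeeping run parallel to the paper's own computation, but the central claim on which your argument rests --- that $\bar E_n:=n^{-1}\sum_{j\le n}E_j\to0$ for \emph{every} $x\in\c_\infty^\g$ --- is false, and the ``up--down variation'' argument you sketch for it cannot be repaired. Consider an admissible sequence with $k_{2j}=j$ and $k_{2j+1}=2j$ for all large $j$. Then $k_n\to\infty$ and $|k_{i+1}-k_i|\sim i/2$, so $\limsup_i|k_{i+1}-k_i|/i^\g=0$ for any $\g>1$ and the corresponding point lies in $\c_\infty^\g$; yet $|k_{i+1}-k_i|/(k_{i+1}+k_i)\to1/3$, so $\bar E_n\to\log\bigl(1+3^{-2\a}\bigr)>0$. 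The flaw in the variation bookkeeping is that the constraint $\log k_n=O(\log n)$ controls only the \emph{difference} of the up- and down-variations of $\log k_j$, not their sum: a sequence may oscillate with multiplicative ratio bounded away from $1$ at every step, giving total variation $\Theta(n)$ and $|A_n(\d)|=\Theta(n)$, while $\log k_n$ stays logarithmic. With $\bar E_n$ bounded away from $0$, your bound $\min\bigl(\a,(A(\a)-\bar E_n)/B\bigr)$ can fall strictly below $\a-\e$, so the proof does not close.

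What rescues the proposition --- and what the paper actually does --- is precisely the coupling between $\bar E_n$ and $q_n$ that your infimum over all $q\ge0$ discards. One argues by a dichotomy in $n$: if $\bar E_n<\kappa$, the numerator correction is small and the error term is at least $-2\kappa$ once $\a$ is close enough to $1$; if $\bar E_n\ge\kappa$, then by a Chebyshev-type count a proportion at least $\kappa$ of the indices satisfy $|k_{i+1}-k_i|>\tfrac{\kappa}{2}k_i$, and since $k_i\to\infty$ each such index contributes an arbitrarily large amount $\log(\kappa^2k_i^2/4)$ to $2S_n$; hence $q_n\gtrsim\log2/\kappa$ along such $n$, while the numerator correction is bounded in absolute value by $\log2$, so the error term is still at least $-\kappa$. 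In the example above this is exactly what happens: $\bar E_n$ does not vanish, but $q_n\sim2\a\log(n/2)\to\infty$ and the conclusion survives. You need to replace the claim $\bar E_n\to0$ by this dichotomy (or an equivalent joint estimate on $\bar E_n$ and $q_n$) to obtain the proposition for every point of $\c_\infty^\g$.
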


 \begin{proof}
Since we are interested in the asymptotic behaviour of  $I_{k_{1}\ldots 
k_{n}}$ for points
$x = \r (k_{1},k_{2},\ldots) \in \c_\infty^\g$, Corollary~\ref{transience} 
and Lemma~\ref{slow} imply that we can assume without loss of generality 
that $k_n>0$
and $|k_{n+1}-k_n|\le n^\g$, for all $n \in \N$. Furthermore, for ease 
of exposition we only consider sequences which do not contain 
repetitions. That is, we assume that $k_{n}\neq k_{n+1}$, for all 
$n\in \N$. The case with repetitions  
can be dealt  with in similar way and is left to the reader.
Using the definition of $\mu_{\a}$,  we then have
\begin{align*}
&\frac{\log\mu_\a(I_{k_1\cdots k_nk_{n+1}})}{\log|I_{k_1\cdots
    k_nk_{n+1}}|}\\&=\frac{-\log(2\z (2\a))}{\log|I_{k_1\cdots
    k_nk_{n+1}}|}+\frac{\log\left[\left(\frac{1}{|k_{n+1}-k_n|^{2\a}}+
    \frac{1}{(k_n+k_{n+1})^{2\a}}\right)\mu_\a(I_{k_1\cdots
    k_n})\right]}{\log|I_{k_1\cdots
    k_nk_{n+1}}|}\\&
\ge\frac{-\log(2\z (2\a))}{\log|I_{k_1\cdots
    k_nk_{n+1}}|}+\frac{\log\left[\frac{2}{|k_{n+1}-k_n|^{2\a}}\mu_\a(I_{k_1\cdots
    k_n})\right]}{\log|I_{k_1\cdots
    k_nk_{n+1}}|}\\&
=\frac{-\log(2\z (2\a))}{\log|I_{k_1\cdots
    k_nk_{n+1}}|}+\frac{\log\left[\frac{2\cdot 2^\a\z 
(2)^\a|I_{k_1\cdots
          k_{n+1}}|^\a}{|I_{k_{1}\cdots 
k_n}|^{2\a}}\mu_\a(I_{k_1\cdots
    k_n})\right]}{\log|I_{k_1\cdots
    k_nk_{n+1}}|}\\&
=\frac{\log\frac{2^\a\z (2)^\a}{\z (2\a)}}{\log|I_{k_1\cdots
    k_nk_{n+1}}|}+\frac{\log(|I_{k_1\cdots 
k_{n+1}}|^\a)}{\log(I_{k_1\cdots 
k_{n+1}})}+\frac{\log\frac{\mu_\a(I_{k_1\cdots
    k_n})}{|I_{k_1\cdots k_n}|^\a}}{\log|I_{k_1\cdots
    k_nk_{n+1}|}}\\&
= \a+ \frac{\log\frac{2^\a\z (2)^\a}{\z (2\a)}}{\log|I_{k_1\cdots
    k_nk_{n+1}}|}+\frac{\log\frac{\mu_\a(I_{k_1\cdots
    k_n})}{|I_{k_1\cdots k_n}|^\a}}{\log|I_{k_1\cdots
    k_nk_{n+1}}|}.
\end{align*}
Since 
\begin{equation}\label{1}
|I_{k_1\cdots k_{n+1}}|<\left(\frac{1}{2\z (2)}\right)^n,
\end{equation}
it follows for each $\kappa>0$ and for all $n$ sufficiently 
large,
\begin{equation}\label{2nd}
 \frac{\log\frac{2^\a\z (2)^\a}{\z 
(2\a)}}{\log|I_{k_1\cdots
    k_nk_{n+1}}|}> - \kappa.
\end{equation}
Clearly, we even have that the limit of the latter expression is equal 
to $0$.
This settles the second term in the final line in the above calculation. The third term is
more subtle, and for this we proceed as follows. 
Using
\eqref{length1} and~\eqref{length2}, we derive with 
the convention $k_{0}\equiv 0$,
\[
|I_{k_1\cdots k_n}|^\a=\prod_{i=0}^{n-1}\left(\frac{1}{2^\a\z 
(2)^\a}\frac{1}{|k_{i+1}-k_i|^{2\a}}\right).
\]
Similarly, using the recursive definition of $\mu_{\a}$, we obtain
\[
\mu_\a(I_{k_1\cdots k_n})=\prod_{i=0}^{n-1}\left(\frac{1}{2\z 
(2\a)}\left[\frac{1}{|k_{i+1}-k_i|^{2\a}}+\frac{1}{(k_{i+1}+k_i)^{2\a}}\right]\right).
\]
Hence,
\begin{align*}
&\frac{\log\frac{\mu_\a(I_{k_1\cdots
    k_n})}{|I_{k_1\cdots k_n}|^\a}}{\log|I_{k_1\cdots
    k_nk_{n+1}}|}=\frac{\log\frac{\prod_{i=0}^{n-1}\left(\frac{1}{2\z
        (2\a)}\left[\frac{1}{|k_{i+1}-k_i|^{2\a}}+\frac{1}{(k_{i+1}+k_i)^{2\a}}\right]\right)}{\prod_{i=0}^{n-1}\left(\frac{1}{2^\a\z 
(2)^\a}\frac{1}{|k_{i+1}-k_i|^{2\a}}\right)}}{\log\left(\prod_{i=0}^{n}\left(\frac{1}{2^\a\z 
(2)^\a}\frac{1}{|k_{i+1}-k_i|^{2\a}}\right)\right)}\\&
=\frac{\log\left(\left[\frac{2^\a\z(2)^\a}{2\z(2\a)}\right]^n\prod_{i=0}^{n-1}\left[1+\left(\frac{|k_{i+1}-k_i|}{k_{i+1}+k_i}\right)^{2\a}\right]\right)}{\log\left(\left[\frac{1}{2^\a\z
        (2)^\a}\right]^{n+1}\prod_{i=0}^{n}\frac{1}{|k_{i+1}-k_i|^{2\a}}\right)}\\&
=\frac{n\a\log(2\z(2))-n\log(2\z(2\a))+\sum_{i=0}^{n-1}\log\left(1+\left[\frac{|k_{i+1}-k_i|}{k_{i+1}+k_i}
\right]^{2\a}\right)}{-(n+1)\log(2\z(2))+\sum_{i=0}^n\log\frac{1}{|k_{i+1}-k_i|^{2\a}}}\\&
=\frac{\log(2\z(2\a)) -\a\log(2\z(2))-\frac{1}{n}\sum_{i=0}^{n-1}\log\left(1
+\left[\frac{|k_{i+1}-k_i|}{k_{i+1}+k_i}\right]^{2\a}\right)}{\frac{n+1}{n}\log(2\z(2))
+\frac{1}{n}\sum_{i=0}^n\log |k_{i+1}-k_i|^{2\a}}.
\end{align*}
Let $\kappa >0$ be fixed. We then distinguish the following two cases. 
\\
First, if for some $n\in \N$ we have
\[
\frac{1}{n}\sum_{i=0}^{n-1}\log\left(1+
\left[\frac{|k_{i+1}-k_i|}{k_{i+1}+k_i}\right]^{2\a}\right)<\kappa,
\]
then we obtain for $\alpha$ sufficiently close to $1$,
\begin{align*}
&\frac{\log(2\z(2\a)) -\a\log(2\z(2))-\frac{1}{n}\sum_{i=0}^{n-1}\log\left(1
+\left[\frac{|k_{i+1}-k_i|}{k_{i+1}+k_i}\right]^{2\a}\right)}{\frac{n+1}{n}\log(2\z(2))
+\frac{1}{n}\sum_{i=0}^n\log |k_{i+1}-k_i|^{2\a}}
\\&  \geq
-\left(\log(2\z(2\a)) -\a\log(2\z(2))\right) - \frac{\kappa}{\log(2\z(2))
+\frac{1}{n}\sum_{i=0}^n\log |k_{i+1}-k_i|^{2\a}}\\&
\geq -\left(\log(2\z(2\a)) -\a\log(2\z(2))\right) - \kappa \geq - 2\kappa.
\end{align*}
Here we made use of the fact that  
\[
\lim_{\a\to 1} \left( \log(2\z(2\a))  - \a\log(2\z(2))\right)=0,
\]
which implies  $\log(2\z(2\a)) - \a\log(2\z(2)) < \kappa$, for all
$\a$ sufficiently close to $1$.
Also, note that in here the lower bound on $\a$
depends only on $\kappa$ and not 
on $n$. In particular, we also have that $\a$ tends to $1$ as $\kappa$ 
tends to $0$.\\
Before we start with the discussion of the second case,  first note that 
since $0<\left[\frac{|k_{i+1}-k_i|}{k_{i+1}+k_i}\right]^{2\a}\le
1$, we clearly always have  
\begin{equation}\label{9}
\frac{1}{n}\sum_{i=0}^{n-1}\log\left(1+\left[\frac{|k_{i+1}-k_i|}{k_{i+1}+k_i}\right]^{2\a}
\right)\le\log 2.
\end{equation}
Furthermore, since  $k_n$ tends to infinity, there exists 
$j(\kappa)\in \N$ such that
\begin{equation}\label{10}
\log\frac{\kappa^2 k_i^2}{4}>\frac{2 \log 2}{\kappa^2},\quad\text{for all }i\ge 
j(\kappa).
\end{equation}
Let us now come to the second case. That  is, we now assume that for some $n\in \N$ we have 
\[
\frac{1}{n}\sum_{i=0}^{n-1}\log\left(1+\left[\frac{|k_{i+1}-k_i|}{k_{i+1}+
k_i}\right]^{2\a}\right)\ge\kappa.
\] 
Since $x>\log(1+x)$ for all $x>0$, we then  have
\[ \frac{1}{n}\sum_{i=0}^{n-1}\left[\frac{|k_{i+1}-k_i|}{k_{i+1}+k_i}
\right]^{2\a}\ge \kappa.\]
Let us make the following two observations. Firstly,
using the fact that $0<\left[\frac{|k_{i+1}-k_i|}{k_{i+1}+k_i}\right]^{2\a}\le
1$, we can apply Chebyshev's Inequality, which gives that for $n$ suffciently large, 
\[
\card \mathcal{I}_{n} >\kappa n,
\]
where 
\[  \mathcal{I}_{n}:= \left\{i \in [j(\kappa), n]\, 
:\,\left[\frac{|k_{i+1}-k_i|}{k_{i+1}+k_i}\right]^{2\a}\ge\frac{\kappa}{2} 
\right\}.\]
Secondly, note that for $1/2<\a < 1$ the following implication holds. 
\[ \hbox{If} \, \, \,
\left[\frac{|k_{i+1}-k_i|}{k_{i+1}+k_i}\right]^{2\a}\ge\frac{\kappa}{2},\quad 
\, \hbox{then} \, \quad
|k_{i+1}-k_i|>\frac{\kappa}{2} k_i.
\]
Combining these two observations with ~\eqref{10}, we then 
compute
\begin{align*}
\frac{1}{n}\sum_{i=0}^n\log|k_{i+1}-k_i|^{2 \a}
&\ge\frac{\a}{n}\sum_{i\in 
\mathcal{I}_{n}}\log |k_{i+1}-k_i|^{2}
\ge \frac{\a}{n}\sum_{i\in 
\mathcal{I}_{n}}\log\frac{\kappa^2k_i^{2}} {4}\\&
\ge\frac{\a}{n}\sum_{i\in \mathcal{I}_{n}}\frac{2\log 2}{\kappa^2}
> \frac{\log 2}{\kappa^2 n} \card \mathcal{I}_{n}>\frac{\log 2}{ \kappa}.
\end{align*}
Inserting this into our estimate above and using ~\eqref{9}, it follows 
\begin{align*}
&\frac{\log(2\z(2\a)) -\a\log(2\z(2))-\frac{1}{n}\sum_{i=0}^{n-1}\log\left(1
+\left[\frac{|k_{i+1}-k_i|}{k_{i+1}+k_i}\right]^{2\a}\right)}{\frac{n+1}{n}\log(2\z(2))
+\frac{1}{n}\sum_{i=0}^n\log |k_{i+1}-k_i|^{2\a}}
\\&  \geq \frac{-\log 2}{\log(2\z(2)) + \frac{\log 2}{ \kappa}} = 
-\kappa \frac{\log 2}{\kappa \log(2\z(2)) + \log 2} \\&
\geq - \kappa.
\end{align*}
This finishes the second case.\\
Combining the latter results with  ~\eqref{2nd}, and putting $\e :=3 
\kappa$, we 
have now shown that
\[
\liminf_{n\to\infty}\frac{\log\frac{\mu_\a(I_{k_1\cdots
    k_n})}{|I_{k_1\cdots k_n}|^\a}}{\log|I_{k_1\cdots k_n k_{n+1}}|}\ge 
     -\e,
\]
and hence,
\[  \liminf_{n \to \infty} \frac{\log\mu_\a(I_{k_1\cdots k_n})}{\log|I_{k_1\cdots
    k_n}|}  \geq \a -  \e,\]
Since $\a$ tends to $1$ for $\e$ 
tending to $0$,  the proof is complete

\end{proof}

\subsection{The proof of the Main Theorem}

$\,$

\noindent Recall that the lower pointwise dimension $\nd
(x)$ of a Borel measure $\nu$ on $\R$ at a point $x \in \R$ is given by
\[ \nd(x) := \liminf_{r\to 
0}\frac{\log\nu (B(x,r))}{\log r},\]
where $B(x,r)$ refers to the interval centred at $x$ with diameter 
equal to $2r$.
The idea is to apply the well-known
Mass Distribution Principle of Frostman \cite{Fr} and Billingsley \cite{Bi}
(see also e.g. \cite{F}).

\noindent In order to be able to apply the Mass Distribution Principle, 
we still require 
   the following straight forward generalization  of  Furstenberg's 
   Lemma  \cite{Fu}.
   
   \begin{lemma}\label{furst}
Let $\nu$ be a Borel measure on $\R$, and let  
$(r_{n})$ be a sequence of positive numbers for 
which $\lim_{n \to \infty} r_{n}=0$ and $\lim_{n\to \infty} ( \log 
r_{n+1} / \log r_{n})=1$. We then have for every $x \in \R$,
   \[
  \nd (x)=\liminf_{n\to\infty}\frac{\log\nu(B(x,r_n))}{\log r_n}.
   \]
   \end{lemma}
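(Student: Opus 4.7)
The approach is to prove the two inequalities separately. The inequality
\[\underline{d}_\nu(x) \;\leq\; \liminf_{n\to\infty}\frac{\log\nu(B(x,r_n))}{\log r_n}\]
is immediate, since the right-hand side is a liminf taken along the particular sequence of radii $(r_n)$ going to zero, whereas the left-hand side is the liminf over \emph{all} radii going to zero.

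For the reverse inequality, let $L$ denote the right-hand side. For each sufficiently small $r>0$ the set $\{n : r_n \ge r\}$ is finite (because $r_n\to 0$ with $r_n>0$) and one may define $n(r):=\max\{n : r_n \ge r\}$. This yields the sandwich $r_{n(r)+1}<r\le r_{n(r)}$, so that $B(x,r)\subseteq B(x,r_{n(r)})$ and hence $\nu(B(x,r))\le\nu(B(x,r_{n(r)}))$. Passing to logarithms (all negative for $r$ small) and dividing by $\log r$ then yields
\[\frac{\log\nu(B(x,r))}{\log r}\;\ge\;\frac{\log\nu(B(x,r_{n(r)}))}{\log r_{n(r)}}\cdot\frac{\log r_{n(r)}}{\log r}.\]

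The crux is to show $\log r_{n(r)}/\log r\to 1$ as $r\to 0$, and this is where the hypothesis $\log r_{n+1}/\log r_n\to 1$ is invoked. Writing $a_n:=-\log r_n$ and $a:=-\log r$, the sandwich becomes $a_{n(r)}\le a<a_{n(r)+1}$, so that
\[\frac{a_{n(r)}}{a_{n(r)+1}}\;\le\;\frac{a_{n(r)}}{a}\;\le\;1.\]
Since $r\to 0$ forces $n(r)\to\infty$, the hypothesis ensures the left-hand ratio tends to $1$, and the squeeze delivers $a_{n(r)}/a\to 1$. The first factor in the earlier display is eventually positive (its numerator and denominator are both negative) and has liminf at least $L$ along the subsequence $(n(r))$, so taking liminf as $r\to 0$ in the product yields $\underline{d}_\nu(x)\ge L$.

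The only real subtlety is that $(r_n)$ is not assumed monotone, but this is handled cleanly by defining $n(r)$ as a maximum rather than as a formal inverse. I do not anticipate any substantive obstacle beyond this mild bookkeeping; the proof follows the standard template for Furstenberg-type subsequence lemmas, with the hypothesis on $\log r_{n+1}/\log r_n$ doing exactly the work needed to pass from the discrete subsequence to arbitrary radii.
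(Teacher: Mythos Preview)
Your proof is correct and follows essentially the same approach as the paper's. Both arguments define $n(r)=\max\{k:r_k\ge r\}$, use the inclusion $B(x,r)\subseteq B(x,r_{n(r)})$, and control the mismatch in radii via the hypothesis $\log r_{n+1}/\log r_n\to 1$; the only cosmetic difference is that the paper bounds $\log r$ directly by $\log r_{n(r)+1}$ in the denominator, whereas you keep $\log r$ and show $\log r_{n(r)}/\log r\to 1$ by a squeeze, which amounts to the same thing.
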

   \begin{proof}
  For  $r>0$ we define $n=n(r) :=\max\{k\in\N\, :\, r_k \ge r\}$. 
  The assertion of the lemma is then an immediate consequence
  of the following simple calculation.
 \[
   \frac{\log\nu(B(x,r))}{\log r} \ge\frac{\log
   \nu(B(x,r_n))}{\log r_{n+1}}
   = \frac{\log r_{n}}{\log r_{n+1}}  \frac{\log
   \nu(B(x,r_n))}{\log r_{n}}.
 \]
   \end{proof}

    \vspace{2mm}
    
\noindent
{\em Proof of the Main Theorem.}
Let $\e>0$ be given, and then fix $1/2<\a <1$ and $\g>1/(2\a-1)$
as in Proposition~\ref{lpd}.  
By Lemma~\ref{slow} we 
have that in order to find a lower bound for $\dim_{H}(\mu_{\a})$ 
it is sufficient to give an estimate for $\ud(x)$ from below, for
each $x = \r (k_{1},k_{2},\ldots ) \in \c_\infty^\g$.   For this note that 
 Proposition~\ref{lpd} implies 
\[
\liminf_{n\to\infty}\frac{\log\mu_\a(I_{k_1\cdots k_n})}{\log |I_{k_1\cdots k_n}|}\ge \a-\e.
\]
In order to deduce the desired lower bound for $\ud(x)$,  we then use
~\eqref{1} and the definition of $\c_\infty^\g$, which gives for 
$r_n:=|I_{k_1\cdots k_n}|$,
\begin{align*}
    \lim_{n\to \infty} \frac{\log 
    r_{n+1}}{ \log r_{n}}&=   1+ \lim_{n\to \infty}
\frac{\log\frac{|I_{k_1\cdots
      k_nk_{n+1}}|}{|I_{k_1\cdots k_n}|}}{\log|I_{k_1\cdots 
      k_n}|}\\& \le
1+  \lim_{n\to \infty} \frac{\g\log n}{n\log(2\z(2))}  =1.
\end{align*}
Therefore, Lemma~\ref{furst} implies that for each 
$x\in \c_\infty^\g$, 
\[
\ud(x)\ge\a-\e.
\]
Combining this with Corollary~\ref{C},
Corollary~\ref{transience} and Lemma~\ref{slow}, we have by the Mass Distribution 
Principle, 
\[
\dim_H \left( \c \right) \geq \dim_H \left( \c_\infty \right) \geq \dim_H \left( \c_\infty^\g 
\right)\ge\dim_H \left(\mu_\a\right) \ge\a-\e.
\]
Finally, note that by Proposition~\ref{lpd} we have that $\a$ tends to 
$1$ for  $\e$ tending to $0$. 
This completes the proof of the theorem.

$\,$ \hfill  $\Box$

\end{document}